\def\morder{{M_\text{O}}}
\def\mom{{\mathbb M}}
\def\reals{\mathbb R}
\def\dv{\Delta v}
\newcommand{\bunderline}[1]{\underline{#1}}
\renewcommand{\vec}[1]{{\bunderline{#1}}}
\newcommand{\mat}[1]{{\bunderline{\bunderline{#1}}}}
\def\mom{{\mathbb M}}
\def\dt{{\Delta t}}
\begin{document}

\title{A novel splitting method for Vlasov-Amp\`ere}
\titlerunning{A novel splitting method for Vlasov-Amp\`ere}

\author{James A. Rossmanith$^{1}$ \and
        Christine Vaughan$^{2}$}
\authorrunning{J.A. Rossmanith and C. Vaughan}

\institute{Iowa State University, Department of
          Mathematics, 411 Morrill Road, Ames, Iowa 50011, USA \\
          \email{\href{mailto:rossmani@iastate.edu}{rossmani@iastate.edu}}
          \and
          Physicians Health Plan, 1400 East Michigan Ave, Lansing, Michigan 48912, USA \\
          \email{\href{mailto:cvaughan880@gmail.com}{cvaughan880@gmail.com}}}

\maketitle

\begin{abstract}
Vlasov equations model the dynamics of plasma in the collisionless regime. A standard approach for numerically solving the Vlasov equation is to operator split the spatial and velocity derivative terms, allowing simpler time-stepping schemes to be applied to each piece separately (known as the Cheng-Knorr method). One disadvantage of such an operator split method is that the order of accuracy of fluid moments (e.g., mass, momentum, and energy) is restricted by the order of the operator splitting (second-order accuracy in the Cheng-Knorr case). In this work, we develop a novel approach that first represents the particle density function on a velocity mesh with a local fluid approximation in each discrete velocity band and then introduces an operator splitting that splits the inter-velocity band coupling terms from the dynamics within the discrete velocity band. The advantage is that the inter-velocity band coupling terms are only needed to achieve consistency of the full distribution functions, but the local fluid models within each band are sufficient to achieve high-order accuracy on global moments such as mass, momentum, and energy. The resulting scheme is verified on several standard Vlasov-Poisson test cases.
\end{abstract}

%\begin{keywords}
%deep learning, reinforcement learning, language understanding
%\end{keywords}

% SIAM Article Template

% Declare title and authors, without \thanks
%\newcommand{\TheTitle}{Bounds-preserving Lax-Wendroff discontinuous Galerkin schemes for quadrature-based moment-closure approximations of kinetic models}

%\newcommand{\TheAuthors}{E.R. Johnson, J.A. Rossmanith, and C. Vaughan}

% Sets running headers as well as PDF title and authors
%\headers{BP LxW-DG schemes for quadrature-based moment-closures}{\TheAuthors}

% Title. If the supplement option is on, then "Supplementary Material"
% is automatically inserted before the title.
%\title{{\TheTitle}\thanks{Submitted to the editors on {\today}. \funding{This work was funded in part by NSF Grant DMS--1620128.}}}

% Authors: full names plus addresses.
%\author{
%  Erica R. Johnson\thanks{Southwest Research Institute, Space Science Department, 
%  6220 Culebra Road, San Antonio, Texas 78238, USA
%  (\email{erica.johnson@swri.edu}).} \and
%  James A. Rossmanith\thanks{Iowa State University, Department of
%  Mathematics, 411 Morrill Road, Ames, Iowa 50011, USA
%    (\email{rossmani@iastate.edu}).}
%    \and
%      Christine Wiersma\thanks{Iowa State University, Department of
%  Mathematics, 411 Morrill Road, Ames, Iowa 50011, USA
%    (\email{cwiersma@iastate.edu}).}
   % \footnotemark[3]
%}

\section{Introduction}
\label{sec:intro}
% !TEX root = main.tex

Plasma is a highly energetic state of matter where electrons have dissociated from their nuclei, creating
a mixture of charged particles that interact through electromagnetic forces and particle-particle collisions. 
In the collisionless approximation, the plasma can be modeled via the celebrated Vlasov equation. In particular, we
are concerned here with the (non-dimensionalized) 
single-species (i.e., streaming electrons with stationary neutralizing background ions)
Vlasov-Amp\`ere system \cite{article:Landau1946} in 1D1V:
\begin{gather}
\label{eqn:vlasovamp1}
f_{,t} + v f_{,x} - E(t,x) f_{,v} = 0, \quad E_{,t} = \mom_1(t,x) - J_0, \\ %\rho_0 - \rho(t,x), \\
\label{eqn:vlasovamp2}
\mom_1(t,x) = \int_{-\infty}^{\infty} v f(t,x,v) \, dv, \quad
J_0 = \frac{1}{b-a} \int_{a}^{b} \mom_1(t=0,x) \, dx,
\end{gather}
where $t\in\reals_{\ge 0}$ is time, $x\in[a,b]$ is the spatial coordinate, $v \in \reals$ is the velocity coordinate,
$f(t,x,v):  \reals_{\ge 0} \times [a,b] \times \reals  \mapsto \reals_{\ge 0}$ is the particle density function (PDF) describing the distribution of electrons in phase space, $E(t,x):  \reals_{\ge 0} \times [a,b] \mapsto \reals$ is the electric field, $\mom_1(t,x):  \reals_{\ge 0} \times [a,b] \mapsto \reals$
is the electron momentum density, and $J_0 \in \reals$ is the stationary background ion momentum density.
The initial electric field can be computed from the Gauss' law:
\begin{equation}
E_{,x} = \rho_0 - \rho(t,x), \,
\rho(t,x) = \int_{-\infty}^{\infty} f(t,x,v) \, dv, \,
\rho_0 = \frac{1}{b-a} \int_{a}^{b} \rho(t=0,x) \, dx,
\end{equation}
where $\rho(t,x):  \reals_{\ge 0} \times [a,b] \mapsto \reals_{\ge 0}$
is the electron mass density and $\rho_0 \in \reals$ is the stationary background ion mass density.

A standard approach for numerically solving the Vlasov equation is to operator split the spatial and velocity derivative terms, allowing simpler time-stepping schemes to be applied to each piece separately (known as the Cheng-Knorr method
\cite{article:ChKn76}):
\begin{align}
\text{\bf Problem A:}& \quad f_{,t} + v f_{,x} = 0, \quad E_{,t} = \mom_1(t,x) - J_0; \\
\text{\bf Problem B:}& \quad f_{,t} - E f_{,v} = 0.
\end{align}
One disadvantage of such an operator split method is that the order of accuracy of fluid moments (e.g., mass, momentum, and energy) is restricted by the order of the operator splitting (second-order accuracy in the Cheng-Knorr \cite{article:ChKn76} case, which is based on Strang operator splitting \cite{article:St68}.)

\section{A Novel Operator Splitting Method}
\label{sec:closure}
% !TEX root = main.tex
We develop in this work a novel operator splitting approach for the Vlasov-Amp\`ere system \eqref{eqn:vlasovamp1}--\eqref{eqn:vlasovamp2}. We begin by discretizing the truncated velocity space, $v \in \left[V_{\text{min}},V_{\text{max}}\right]$, into non-overlapping {\it velocity bands} each of width and center:
\begin{equation}
\label{eqn:dv}
\dv = \left(V_{\text{max}} - V_{\text{min}}\right)/N_v, \quad
v_j = V_{\text{min}} + \left(j - \frac{1}{2}\right)\dv \hspace{1em} \text{for} \hspace{1em} j=1,\dots,N_v.
\end{equation}
where $V_{\text{max}}$ is the maximum of velocity domain, $V_{\text{min}}$ is the minimum velocity, and $N_v$ is the number of velocity layers. %The center of each velocity band is
%\begin{equation}
%\label{eqn:vj}
%v_j = V_{\text{min}} + \left(j - \frac{1}{2}\right)\dv \hspace{1em} \text{for} \hspace{1em} j=1,2,\dots,N_v.
%\end{equation}
Within each velocity layer, we define the {\it local fluid moments} as,
\begin{equation}
\mom_{\ell}^{(j)} = \int_{v_{j-\frac{1}{2}}}^{v_{j+\frac{1}{2}}} v^{\ell} f(t,x,v)\,dv, \quad \ell=0,1,2,3,4,
\end{equation}
where $v_{j \pm \frac{1}{2}} = v_j \pm \Delta v/2$. Theoretically, we can take as many local fluid moments as desired; in the current work, we restrict ourselves to five (e.g., $\ell=0,1,2,3,4$). In practice, at least the first three moments should be included (mass, momentum, energy) to guarantee high-order accuracy on the lowest-order statistics. The more moments included, the more computationally complex the system in each velocity band will be. Five moments were chosen to balance accuracy and computational complexity.
We then take Vlasov equation \eqref{eqn:vlasovamp1}, multiply by $v^{\ell}$, and integrate over each
discrete velocity band, which, after integration by parts in velocity, becomes the following expression:
\begin{equation}
\mom_{\ell,t}^{(j)} + \mom_{\ell+1,x}^{(j)} - \left[Ev^\ell f\right]_{v_{j-\frac{1}{2}}}^{v_{j+\frac{1}{2}}}  = 
-\ell \mom_{\ell-1}^{(j)} E,
\end{equation}
for $j=1,\ldots,N_v$ and $\ell=0,1,2,3,4$. We note that the global moments (e.g., mass, momentum, and energy) can be directly computed from the local moments:
\begin{equation}
\label{eqn:global_moms}
    \mom_\ell = \sum_{j=1}^{N_v} \mom_\ell^{(j)}, \quad \text{for} \quad \ell=0,1,2,3,4.
\end{equation}
In each velocity band, we arrive at the following system of local moments:
\begin{equation}
\begin{split}
\label{eqn:bandeqn}
\vec{\mom}^{(j)}_{,t} + \mat{A}^{(j)} \, \vec{\mom}^{(j)}_{,x}
=-E \vec{S}^{(j)} 
+ E \vec{S}^{(j)}_{+}  f\bigl|_{v_{j+\frac{1}{2}}} -
E \vec{S}^{(j)}_{-}  f\bigl|_{v_{j-\frac{1}{2}}},
%\begin{bmatrix}
%1 \\
%\left(V_j + \frac{\dv}{2}\right) \\
%\left(V_j + \frac{\dv}{2}\right)^2 \\
%\left(V_j + \frac{\dv}{2}\right)^3 \\
%\left(V_j + \frac{\dv}{2}\right)^4 
%\end{bmatrix} 
%\vec{S^+}
%E f\bigl|_{v_{j+\frac{1}{2}}} + % \left(t,x,v_j + \frac{\dv}{2}\right) + 
%\begin{bmatrix}
%1 \\
%\left(V_j - \frac{\dv}{2}\right) \\
%\left(V_j - \frac{\dv}{2}\right)^2 \\
%\left(V_j - \frac{\dv}{2}\right)^3 \\
%\left(V_j - \frac{\dv}{2}\right)^4 
%\end{bmatrix}
%\vec{S^-} E f\bigl|_{v_{j-\frac{1}{2}}},
\end{split}
\end{equation}
where
\begin{equation}
\vec{\mom}^{(j)} = \begin{bmatrix}
\mom_0^{(j)} \\
\mom_1^{(j)} \\
\mom_2^{(j)} \\
\mom_3^{(j)} \\
\mom_4^{(j)}
\end{bmatrix}, \,
%\quad
\mat{A}^{(j)} = 
\begin{bmatrix}
0 & 1 & 0 & 0 & 0 \\
0 & 0 & 1 & 0 & 0 \\
0 & 0 & 0 & 1 & 0 \\
0 & 0 & 0 & 0 & 1 \\
A_{51} & A_{52} & A_{53} & A_{54} & A_{55}
\end{bmatrix}, \, 
%\quad
\vec{S}^{(j)}=
\begin{bmatrix}
0 \\
\mom_0^{(j)} \\
2\mom_1^{(j)} \\
3\mom_2^{(j)} \\
4\mom_3^{(j)} 
\end{bmatrix}, \,
\vec{S}^{(j)}_{\pm} = 
\begin{bmatrix} 1 \\
v_{j\pm\frac{1}{2}} \\
v_{j\pm\frac{1}{2}}^2 \\
v_{j\pm\frac{1}{2}}^3 \\
v_{j\pm\frac{1}{2}}^4
\end{bmatrix},
\end{equation}
where
\begin{equation}
\begin{split}
A_{51} = \frac{5\Delta v^4 v_j}{336}  - \frac{5\Delta v^2 v_j^3}{18} + v_j^5, \quad
A_{52} = \frac{-5\Delta v^4}{336} + \frac{5\Delta v^2 v_j^2}{6} - 5v_j^4, \\
A_{53} = \frac{-5 \Delta v^2 v_j }{6} + 10 v_j^3, \quad
A_{54} = \left(\frac{5}{18}\right)\left(\Delta v^2 - 36 v_j^2\right), \quad
A_{55} = 5 v_j.
\end{split}
\end{equation}
%where $\dv$ and $v_j$ are defined in \eqref{eqn:dv} and \eqref{eqn:vj}, respectively.
It can readily be shown that $\mat{A}^{(j)}$ is fully diagonalizable with only real eigenvalues.
For a full derivation, please see Vaughan \cite{article:ChristineVaughan2021}.

We note that the first three terms in equation \eqref{eqn:bandeqn} are entirely local to the $j^{\text{th}}$
velocity band, and only the final two terms in \eqref{eqn:bandeqn} are responsible for coupling the $j^{\text{th}}$
band to the $j+1$ and $j-1$ bands. This suggests a new operator-splitting approach:
\begin{align}
\label{eqn:prob_A}
\text{\bf Problem A:}& \quad \vec{\mom}^{(j)}_{,t} =  E \vec{S}^{(j)}_{+}  f\bigl|_{v_{j+\frac{1}{2}}} -
E \vec{S}^{(j)}_{-}  f\bigl|_{v_{j-\frac{1}{2}}}; \\
\label{eqn:prob_B}
\text{\bf Problem B:}& \quad \vec{\mom}^{(j)}_{,t} + \mat{A}^{(j)} \, \vec{\mom}^{(j)}_{,x}
=-E \vec{S}^{(j)}, \quad E_{,t} = \mom_1(t,x) - J_0.
\end{align}
Just as in Cheng and Knorr \cite{article:ChKn76}, we introduce Strang operator splitting \cite{article:St68}:
we will solve {\bf Problem A} on a half time step, $\frac{\Delta t}{2}$, followed by solving {\bf Problem B}
 on a full time step, $\Delta t$, and then finally again solving {\bf Problem A} on a half time.

\begin{proposition}
Under the assumption that the distribution function, $f$, is zero at $v=V_{\text{min}}$ and $v=V_{\text{max}}$, the operator splitting technique shown in \eqref{eqn:prob_A} and \eqref{eqn:prob_B} clearly introduces an
${\mathcal O}\left(\Delta t^2\right)$ splitting error into the accuracy of the PDF, $f$, but there is no operator
splitting error in the global moments: $\mom_0,\ldots,\mom_4$. 
\end{proposition}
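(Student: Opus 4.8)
\medskip
\noindent\emph{Proof plan.} The first assertion is nothing more than the classical fact that Strang splitting \cite{article:St68} is a symmetric, second-order composition. Since the flows generated by \eqref{eqn:prob_A} and \eqref{eqn:prob_B} do not commute in general, a Baker--Campbell--Hausdorff expansion of the sweep ``half step of \textbf{Problem A}, full step of \textbf{Problem B}, half step of \textbf{Problem A}'' gives a one-step error of $\mathcal{O}(\Delta t^3)$, hence $\mathcal{O}(\Delta t^2)$ after $\mathcal{O}(1/\Delta t)$ steps; because the PDF $f$ is reconstructed from the band moments $\vec{\mom}^{(j)}$, it inherits exactly this splitting error. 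I would dispatch this with a citation and spend the real effort on the second assertion.

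The heart of the second assertion is a telescoping cancellation in the inter-band coupling terms. Summing the right-hand side of \eqref{eqn:prob_A} over $j=1,\dots,N_v$, observe that $v_{j+\frac{1}{2}}=v_{(j+1)-\frac{1}{2}}$ and that each $\vec{S}^{(j)}_{\pm}$ is the vector $\bigl(1,v_{j\pm\frac{1}{2}},\dots,v_{j\pm\frac{1}{2}}^{4}\bigr)^{\top}$, which depends on $j$ and the sign only through the shared edge velocity; hence $\vec{S}^{(j)}_{+}=\vec{S}^{(j+1)}_{-}$, the contributions at interior velocity edges cancel pairwise, and only $E\,\vec{S}^{(N_v)}_{+}f|_{V_{\max}}-E\,\vec{S}^{(1)}_{-}f|_{V_{\min}}$ remains, both terms of which vanish under the hypothesis $f(\cdot,\cdot,V_{\min})=f(\cdot,\cdot,V_{\max})=0$. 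By the definition \eqref{eqn:global_moms} this says that summing \textbf{Problem A} over $j$ yields $\vec{\mom}_{,t}=0$; since \eqref{eqn:prob_A} also does not evolve $E$, the flow of \textbf{Problem A} leaves the global state $(\mom_0,\dots,\mom_4,E)$ pointwise unchanged -- the global moments and the field are exact invariants of \textbf{Problem A}.

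Next, summing \eqref{eqn:prob_B} over $j$ and using the shift structure of the first rows of $\mat{A}^{(j)}$ together with the form of $\vec{S}^{(j)}$, the band systems collapse to the intended global system in which $\vec{\mom}$ is transported by the summed fluxes $\partial_x(\cdot)$ with source $-E\,(0,\mom_0,2\mom_1,3\mom_2,4\mom_3)^{\top}$, coupled to $E_{,t}=\mom_1-J_0$. Combining: in the Strang sweep the two \textbf{Problem A} half-steps are the identity on $(\mom_0,\dots,\mom_4,E)$ by the previous paragraph, so the composite map, restricted to this global state, coincides with the single \textbf{Problem B} solve over the full step $\Delta t$ -- which is precisely the evolution these quantities are meant to follow. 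Hence the splitting contributes no error to $\mom_0,\dots,\mom_4$, in sharp contrast with the $\mathcal{O}(\Delta t^2)$ error it produces in $f$.

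The step I expect to require the most care is reconciling the two preceding paragraphs. Although \textbf{Problem A} preserves the \emph{sum} $\sum_j\vec{\mom}^{(j)}$, it does redistribute the individual band moments, and the flux entering the $\mom_4$-row of the summed \textbf{Problem B} system involves the band-wise closure $\sum_j\widetilde{\mom}_5^{(j)}$, which is sensitive to that redistribution. One therefore has to check that the moments actually being claimed splitting-error-free -- in particular mass $\mom_0$, momentum $\mom_1$, and energy $\mom_2$, whose \textbf{Problem B} fluxes and sources are expressible through global moments and $E$ alone -- are genuinely insulated from the band-level data that \textbf{Problem A} perturbs, and then propagate this insulation down the moment cascade. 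I would make this precise either by a component-by-component one-step estimate or, equivalently, by a modified-equation argument verifying that the leading splitting-error vector field annihilates under the band-summation map on the relevant components.
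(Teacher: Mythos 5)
Your core argument---the telescoping cancellation in the sum over $j$ of \eqref{eqn:prob_A} using $\vec{S}_+^{(j)}=\vec{S}_-^{(j+1)}$ at shared band edges, with the boundary terms killed by $f=0$ at $v=V_{\text{min}},V_{\text{max}}$, plus the standard second-order Strang estimate for $f$---is exactly the paper's proof, which consists of precisely that telescoping observation. The closure subtlety you flag in your final paragraph (the $\mom_4$ flux in the summed \textbf{Problem B} system depends on band-level data that \textbf{Problem A} redistributes) is a legitimate point about where the claim is loose, but it lies beyond anything the paper itself addresses, since its proof stops at the invariance of the global moments under \textbf{Problem A}.
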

\begin{proof}
The global moments, $\mom_0,\mom_1,\mom_2,\mom_3,\mom_4$, as defined by \eqref{eqn:global_moms} 
remain constant during {\bf Problem A}, since $\vec{S}_+^{(j)} = \vec{S}_-^{(j+1)}$ for all $j$,
%\[
%\vec{S}_+^{(j)} = \vec{S}_-^{(j+1)} \quad \forall j,
%\]
and summing over $j$ in \eqref{eqn:prob_A} results in a telescoping
series such that the only remaining terms involve $f$ at $v=V_{\text{min}}$ and $v=V_{\text{max}}$, which
we assume are zero.
\end{proof}

\section{Numerical Method}
\label{sec:method}
% !TEX root = main.tex

We discretize \eqref{eqn:prob_A}--\eqref{eqn:prob_B} using the Lax-Wendroff discontinuous Galerkin (LxW-DG) as 
described in Johnson et al. \cite{article:JoRoVa2023}, which is based on previous versions of the LxW-DG work of Qiu et al. \cite{article:Qiu05} and Gassner et al. \cite{article:GasDumHinMun2011}. The method allows us to discretize in
space and time with piecewise polynomials of degree $\morder$ (i.e., ${\mathcal P}^{\morder}$), 
which in the case of smooth solutions results in errors of size ${\mathcal O}\left( \Delta t^{\morder+1} + 
\Delta x^{\morder+1}\right)$. We omit the details of this here and instead refer the reader to Johnson et al. \cite{article:JoRoVa2023}.

The novel aspect of this work is the introduction of {\bf Problem A} \eqref{eqn:prob_A}; and, as such,
we include details here on how this equation is solved numerically.
If $E\geq0$, we construct the following Taylor expansion comparing moments at $t=t^n$ and $t=t^n + \Delta t$
(see Vaughan \cite{article:ChristineVaughan2021}):
\begin{equation}
    \vec{\mom}^{(j)}(t^n+\dt) = \vec{\mom}^{(j)} - \dt\left(\vec{S}_+^{(j)}E\beta_+^{(j)} - \vec{S}_-^{(j)}E\beta_+^{(j-1)}\right) + \mathcal{O}(\dt^5),
\end{equation}
where
\begin{equation}
\begin{split}
    &\beta_+^{(j)} = \left(1- \frac{5\kappa}{2} + \frac{25 \kappa^2}{6} - \frac{125\kappa^3}{24}\right)\alpha_+^{(j)} 
    + \left(\frac{\kappa}{2} - \frac{5\kappa^2}{3} + \frac{25\kappa^3}{8}\right)\alpha_+^{(j-1)} \\
    &+ \left(\frac{\kappa^2}{6}-\frac{5\kappa^3}{8}\right)\alpha_+^{(j-2)} + \left(\frac{\kappa^3}{24}\right)\alpha_+^{(j-3)}, \, \kappa = \frac{5E\dt}{\dv}, \, 
    \alpha_+^{(j)} = \vec{N}_+^{(j)}\cdot\vec{\mom}^{(j)},
    \end{split}
\end{equation}
where $\vec{N}_+^{(j)}$ is an interpolated vector describing the distribution function at the upper interface of the velocity band $j$. There is a similar formula for $E \le 0$ \cite{article:ChristineVaughan2021}.

%For $E\leq0$, the equations look very similar,
%\begin{equation}
%    \vec{\mom}^{(j)}(t^n+\dt) = \vec{\mom}^{(j)} - \dt\left(\vec{S}_+^{(j)}E\beta_-^{(j+1)} - \vec{S}_-^{(j)}E\beta_-^{(j)}\right) + \mathcal{O}(\dt^5),
%\end{equation}
%where
%\begin{equation}
%\begin{split}
%    &\beta_-^{(j)} = \left(1+\frac{5\kappa}{2} + \frac{25\kappa^2}{6} + \frac{125\kappa^3}{24}\right)\alpha_-^{(j)} 
%    - \left(\frac{\kappa}{2} + \frac{5\kappa^2}{3} + \frac{25\kappa^3}{8}\right)\alpha_-^{(j+1)} \\
%    &+ \left(\frac{\kappa^2}{6} + \frac{5\kappa^3}{8}\right)\alpha_-^{(j+2)} + \left(\frac{\kappa^3}{24}\right)\alpha_-%^{(j+3)}, \, \kappa = \frac{5E\dt}{\dv}, \, \alpha_-^{(j)} = \vec{N}_-^{(j)}\cdot\vec{\mom}^{(j)},
%    \end{split}
%\end{equation}
%where $\vec{N}_-^{(j)}$ is an interpolated vector describing the distribution function at the lower interface of the %velocity band $j$.

We define on each element, Gauss-Legendre quadrature points, $\xi_m$, for $m=1,2,\ldots,\morder$. We also define the Legendre polynomials $\psi_k(\xi)$ for $k=1,2,\ldots,\morder$. Putting all the pieces together, we find that
\begin{equation}
    \vec{\mom}_k^{(j)}(t^n+\dt) = \vec{\mom}_k^{(j)}(t^n) - \dt\left(\vec{\gamma}_k^{\left(j+\frac{1}{2}\right)} - \vec{\gamma}_k^{\left(j-\frac{1}{2}\right)}\right) + \mathcal{O}(\dt^5),
\end{equation}
where 
\begin{equation}
\vec{\gamma}_k^{\left(j\pm\frac{1}{2}\right)} = \vec{S}^{\left(j\right)}_{\pm} \left[\frac{1}{2}\sum_{m=1}^{\morder}\omega_m\psi_\ell(\xi_m)\beta_m^{\left(j\pm\frac{1}{2}\right)}\right],
\end{equation}
where
\begin{align}
    \beta_m^{\left(j-\frac{1}{2}\right)} &= \min(E(\xi_m),0) \, \beta_-^{(j-1)} + \max(E(\xi_m),0) \, \beta_+^{(j)},\\
    \beta_m^{\left(j+\frac{1}{2}\right)} &= \min(E(\xi_m),0) \, \beta_-^{(j)} + \max(E(\xi_m),0) \, \beta_+^{(j+1)}.
\end{align}

\section{Numerical Examples}
\label{sec:examples}
% !TEX root = main.tex

In this section we discuss the examples used to test the velocity-banded quadrature-based moment closure. We begin with a forced problem, using the method of manufactured solutions. We used this example to perform a convergence test to show the method converges up to fourth order in space and time. We also show that error in space and time as well as round-off error dominate the error over that produced by the discretization of the velocity space. Next, we show the results from the weak Landau damping problem, the strong Landau damping problem, and two-stream instability. Finally, we show the results from a plasma sheath example. 

\begin{table}
\begin{center}
\begin{tabular}{|c||c|c||c|c||c|c|}
\hline
{\normalsize $N$} & {\normalsize $\text{e}_N \left(M_{\text{O}}=1\right)$} & {\normalsize $\log_2\frac{\text{e}_{N/2}}{\text{e}_{N}}$}  & {\normalsize $\text{e}_N \left(M_{\text{O}}=2\right)$} & {\normalsize $\log_2\frac{\text{e}_{N/2}}{\text{e}_{N}}$}  & {\normalsize $\text{e}_N \left(M_{\text{O}}=3\right)$} & {\normalsize $\log_2\frac{\text{e}_{N/2}}{\text{e}_{N}}$} \\
\hline\hline
{\normalsize 20} & {\normalsize 5.203e-02} & -- & {\normalsize 2.995e-03} & -- & {\normalsize 1.144e-04} & -- \\\hline
{\normalsize 40} & {\normalsize 1.454e-02} & {\normalsize $1.840$} & {\normalsize 4.075e-04} & {\normalsize $2.878$} & {\normalsize 7.829e-06} & {\normalsize $3.869$} \\\hline
{\normalsize 80} & {\normalsize 3.890e-03} & {\normalsize $1.902$} & {\normalsize 4.964e-05} & {\normalsize $3.037$} & {\normalsize 4.927e-07} & {\normalsize $3.990$} \\\hline
{\normalsize 160} & {\normalsize 9.943e-04} & {\normalsize $1.968$} & {\normalsize 6.170e-06} & {\normalsize $3.008$} & {\normalsize 3.098e-08} & {\normalsize $3.991$} \\\hline
{\normalsize 320} & {\normalsize 2.502e-04} & {\normalsize $1.991$} & {\normalsize 7.715e-07} & {\normalsize $3.000$} & {\normalsize 2.335e-09} & {\normalsize $3.730$} \\\hline
%{\normalsize 20} &  {\normalsize 4.878e-02} & -- & {\normalsize 4.328e-03} & -- & {\normalsize 9.991e-05} & -- \\\hline
%{\normalsize 40} &  {\normalsize 1.133e-02} & {\normalsize $2.107$} & {\normalsize 7.507e-04} & {\normalsize $2.527$} & {\normalsize 5.707e-06} & {\normalsize $4.130$} \\\hline
%{\normalsize 80} & {\normalsize 2.902e-03} & {\normalsize $1.964$} & {\normalsize 1.261e-04} & {\normalsize $2.573$} & {\normalsize 3.482e-07} & {\normalsize $4.035$} \\\hline
%{\normalsize 160} & {\normalsize 6.967e-04} & {\normalsize $2.059$} & {\normalsize 1.960e-05} & {\normalsize $2.686$} & {\normalsize 2.244e-08} & {\normalsize $3.956$} \\\hline
%{\normalsize 320} & {\normalsize 1.864e-04} & {\normalsize $1.902$} & {\normalsize 2.834e-06} & {\normalsize $2.790$} & {\normalsize 3.276e-09} & {\normalsize $2.776$} \\\hline
\end{tabular} 
\caption{Relative $L^2$ errors for the forced manufactured problem with 200 bands for the Vlasov-Amp\`ere equation
with the ${\mathcal P}^1$ (Columns 2 and 3), ${\mathcal P}^2$ (Columns 4 and 5), and ${\mathcal P}^3$ (Columns 6 and 7) Lax-Wendroff DG schemes. Here $N$ refers to the number of spatial elements, $\Delta x = 2\pi/N$, and $\Delta t = \text{CFL} \times \Delta x/V_{\text{max}}$, where $V_{\text{max}} = 4$ and $\text{CFL} = 0.09$.
\label{table:manuf_sol_80}}
\end{center}
\end{table}

\subsection{Manufactured Solution}
\label{sub:conv_test}
Consider the forced Vlasov-Poisson equation:
\begin{equation}
\begin{split}
f_{,t} + vf_{,x} + Ef_{,v} = \psi(t,x,v), \quad
E_{,x} = \sqrt{\pi} - \int_{-\infty}^\infty f(t,x,v)\, dv,
\end{split}
\end{equation}
on the domain $(t,x,v)\in [0,0.1] \times [-\pi,\pi] \times [-4,4]$ with periodic boundary conditions in $x$. 
The forced exact solution is given by \cite{article:RoSe10}:
\begin{gather}
f(t,x,v) = \left(2-\cos\left(2x-2\pi t\right)\right)e^{-\frac{(4v-1)^2}{4}}, \,
E(t,x) = \frac{\sqrt{\pi}}{4}\sin\left(2x-2\pi t\right), \\
\begin{gathered}
\psi\left(t,x,v\right) = \frac{1}{2} S e^{-\frac{1}{4}\left(4v-1\right)^2}\bigg[\left(2\sqrt{\pi}+1\right)\left(4v - 2\sqrt{\pi}\right)
        - \sqrt{\pi}\left(4v-1\right) C \bigg],
\end{gathered}
\end{gather}
where $S:=\sin(2x-2\pi t)$ and $C:=\cos(2x-2\pi t)$. A convergence table for 3 different
polynomial orders are shown in Table \ref{table:manuf_sol_80}. In the convergence study, we
are decreasing $\Delta x$ and $\Delta t$ (both are proportional to $1/N$), but fixing $\Delta v$ (i.e., the number of velocity bands).

\begin{figure}
\begin{center}
\begin{tabular}{cc}
  (a)\includegraphics[width=.4\linewidth]{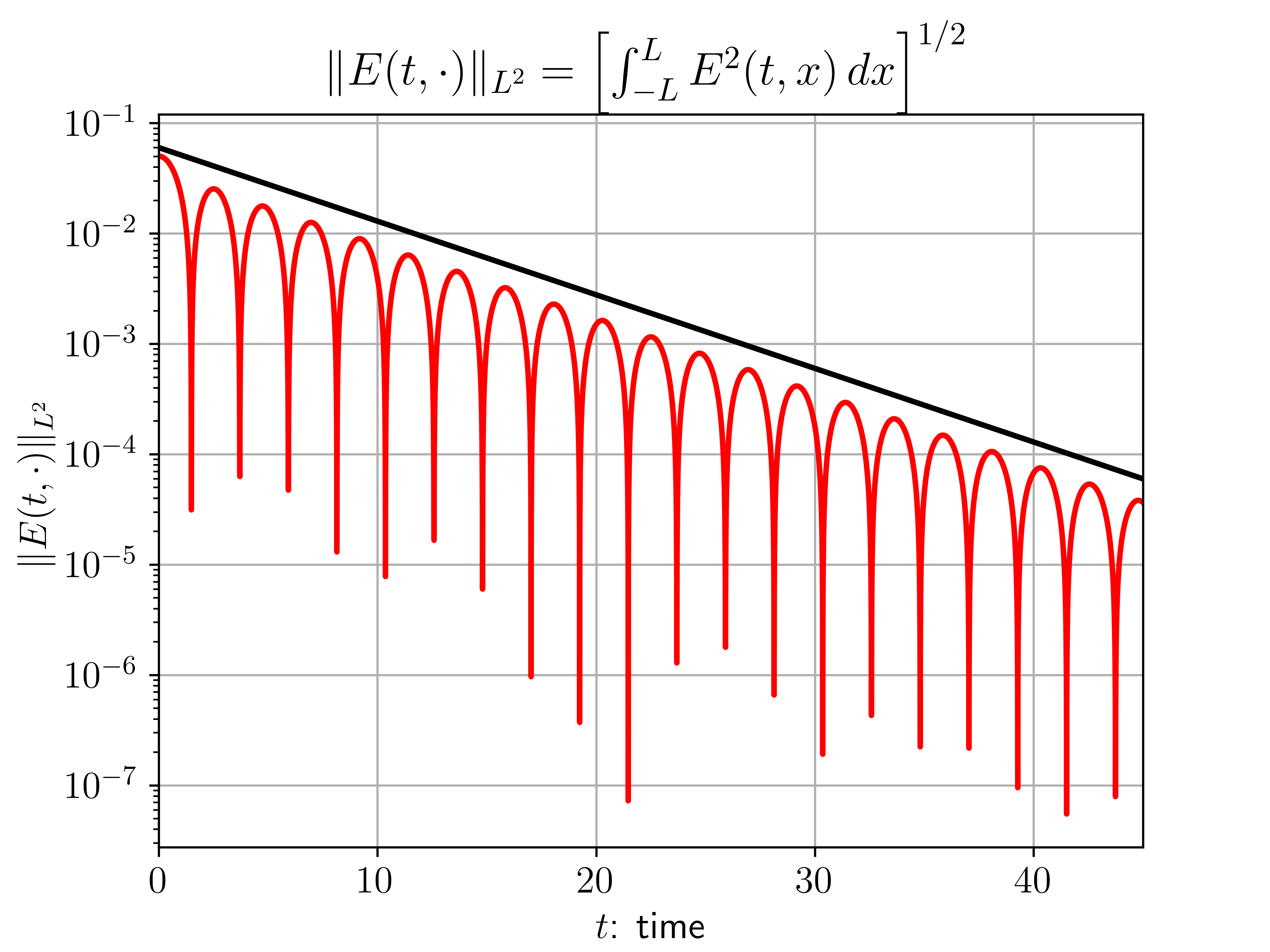} &
  (b)\includegraphics[width=.4\linewidth]{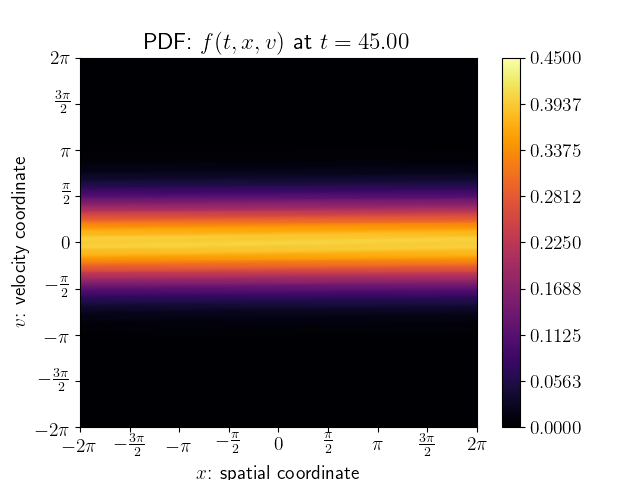}
\end{tabular}
\end{center}
\caption{Weak Landau damping problem. Panel (a) shows the exponential decay of the $L_2$-norm of the electric field and the line $0.06 e^{\gamma t}$ with the numerically computed decay rate $\gamma = -0.1536$,
and Panel (b) shows the reconstructed PDF.}
\label{fig:weak_landau}
\end{figure}

\subsection{Weak and Strong Landau Damping}
\label{sub:weak_landau}
Landau damping is a well-studied problem both numerically and analytically \cite{article:ChKn76,article:Heath10}, and the damping effect of the $L_2$ norm of the electric field is well-established and has been rigorously studied by Mouhot and Villani \cite{article:MoVi10}. This work earned Villani the 2010 Fields Medal. Given that the Landau damping problem is so well-studied, it is a favorite test example to study methods for plasma dynamics. The initial distribution function for the weak Landau damping problem is,
\begin{equation}
f(t=0,x,v) = \left(1 + \alpha\cos\left(kx\right)\right)\frac{1}{\sqrt{2\pi}}e^{-\frac{v^2}{2}},
\end{equation}
with $\alpha = 0.01$ (weak) or $\alpha = 0.5$ (strong) and $k = 0.5$ on the domain $(x,v)\in [-2\pi,2\pi] \times [-2\pi,2\pi]$.

The weak Landau simulation was run to $t=45$ with 40 cells in space and 80 bands in velocity. The simulation results are presented in Figure \ref{fig:weak_landau}. Panel (a) shows the damping of the $L_2$ norm of the electric field over time. Panel (b) shows the reconstructed distribution function at the final time. These results closely match those from the literature.

The strong Landau simulation was run to $t=60$ with 40 cells in space and 80 bands in velocity. The simulation results are presented in Figure \ref{fig:strong_landau}. Panel (a) shows the damping and growth of the $L_2$ norm of the electric field over time.  Panel (b) shows the reconstructed distribution function at the final time. These results closely match those from the literature.

\begin{figure}
\begin{center}
\begin{tabular}{cc}
  (a)\includegraphics[width=.4\linewidth]{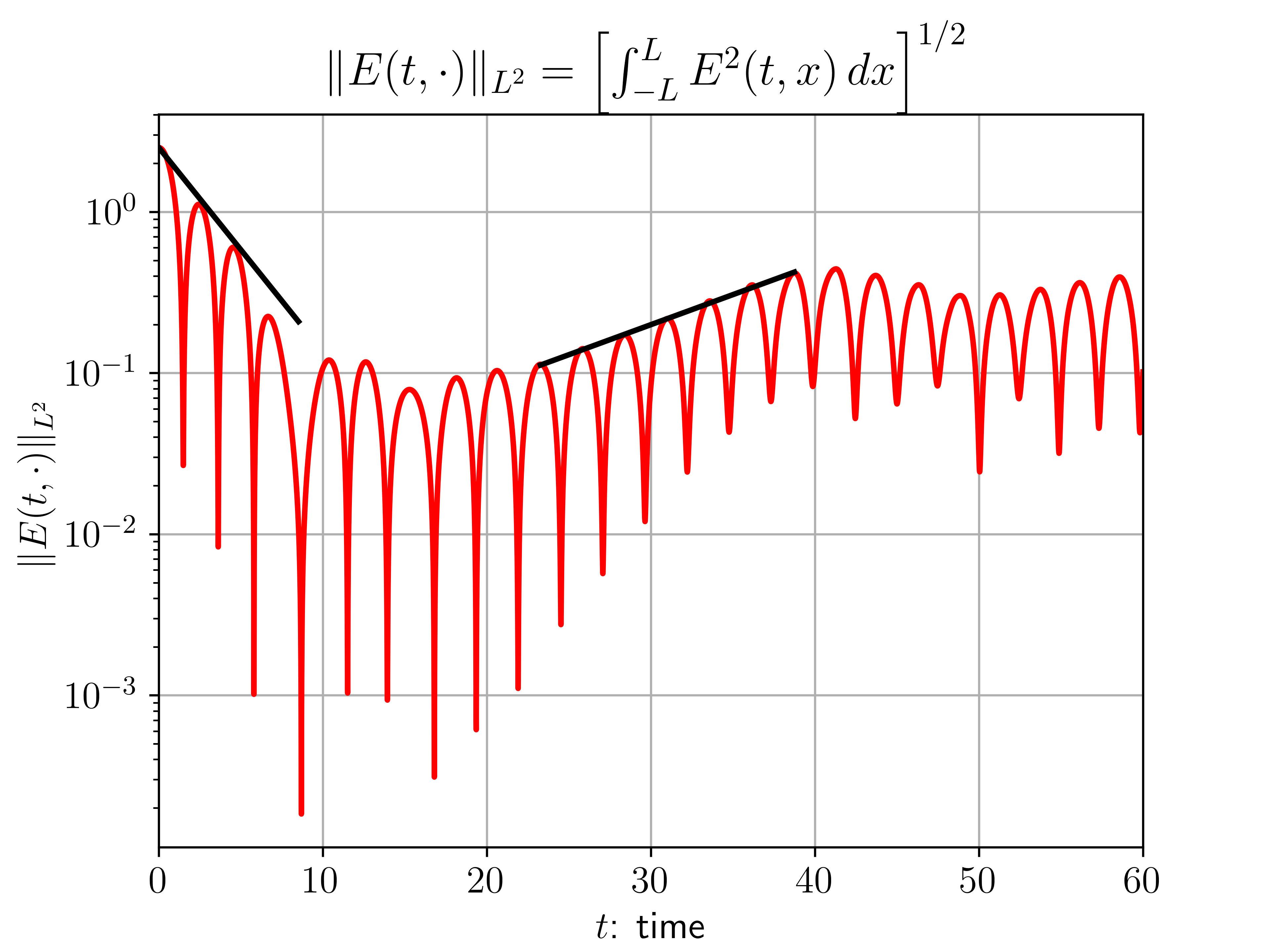} &
  (b)\includegraphics[width=.4\linewidth]{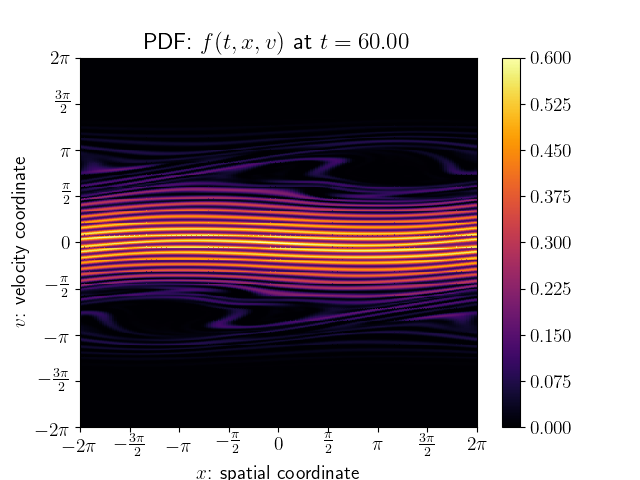}
\end{tabular}
\end{center}
\caption{Strong Landau damping problem. Panel (a) shows the initial exponential decay of the $L_2$-norm of the electric field ($\gamma_1 =  -0.2918$) followed by
exponential growth due to nonlinear effects ($\gamma_2 = 0.08584$),
and Panel (b) shows the reconstructed PDF.}
\label{fig:strong_landau}
\end{figure}

\subsection{Plasma Sheath}
\label{sub:plasma_sheath}
The plasma sheath problem is a classic example where the distribution function represents the electrons inside the region, $0\leq x\leq L$. We use a zero-inflow boundary condition, assuming no electrons can enter the domain. The initial condition is given by
\begin{equation}
f(0,x,v) = \frac{\rho_0}{\sqrt{2\pi\theta_0}} e^{-\frac{v^2}{2\theta_0}}.
\end{equation}
The problem is solved on the domain $(t,x,v)\in [0,140] \times [0, 1] \times [-0.2,0.2]$.
The setup shown here is identical to that in Seal \cite{thesis:seal}. 

Figure \ref{fig:plasma_sheath} displays the results of the simulation with 80 grid cells in space and
80 bands in velocity. Panel (a) shows the PDF and Panel (b) shows the density at $t=140$, where  
a plasma sheath has formed due to the net positive charge, keeping the electrons from leaving the domain. 
These results match those found in literature (e.g., Figure 20(b) of Seal \cite{thesis:seal}).

\begin{figure}
\begin{center}
\begin{tabular}{cc}
  (a)\includegraphics[width=.4\linewidth]{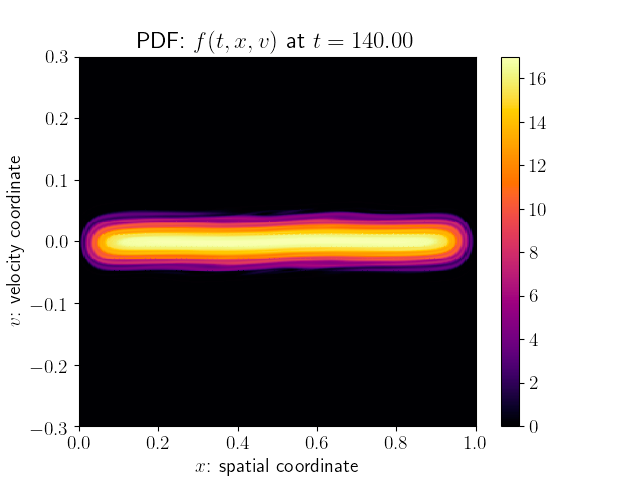} &
  (b)\includegraphics[width=.4\linewidth]{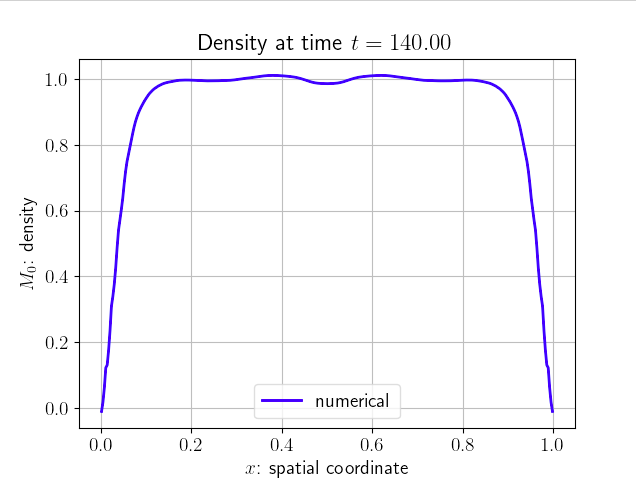}
\end{tabular}
\end{center}
\caption{Plasma sheath problem. Panel (a) shows the PDF, and Panel (b) shows the density.}
\label{fig:plasma_sheath}
\end{figure}

\section{Conclusions}
In this work, a novel discontinuous Galerkin scheme for the Vlasov-Amp\`ere system was developed. 
The scheme uses a spectral element moment-closure that results in a series of fluid models in each velocity
band. Strang operator splitting is introduced
to decouple these local fluid models and their inter-velocity coupling term. The coupling terms do not directly affect the global moments (e.g., mass, momentum, and energy); thus, high-order accuracy is achieved on the global moments.
 The scheme was tested on several numerical examples.
 
\section*{Acknowledgments}
We would like to thank the anonymous referees for their thoughtful comments and suggestions that helped to improve this paper. This research was partially funded by NSF Grants DMS--2012699 and DMS--2410538.

\bibliographystyle{siam}

\end{document}